\numberwithin{equation}{section}
\numberwithin{table}{section}
\theoremstyle{plain}
\newtheorem{thm}{Theorem}[section]
\newtheorem{defn}[thm]{Definition}
\newtheorem{ex}[thm]{Example}
\newtheorem{corollary}[thm]{Corollary}
\newtheorem{theorem}{Theorem}[section]
\theoremstyle{remark}
\newtheorem{remark}[theorem]{Remark}
\DeclareMathOperator{\rank}{rank}
\newcommand{\normdist}{\mathcal{N}}
\newcommand{\range}{\mathcal{R}}
\newcommand{\trans}{\mathsf{T}}
\numberwithin{equation}{section}
\numberwithin{table}{section}
\DeclareMathOperator{\cov}{cov}
\newcommand{\post}{\text{post}}
\newcommand{\prior}{\text{prior}}
\title{Differential algebra for model comparison}
\author{Heather~A.~Harrington, 
 Kenneth~L.~Ho, 
 Nicolette~Meshkat}
\address{Heather A Harrington: Mathematical Institute, University of Oxford, Oxford OX2 6GG, UK, Kenneth~L.~Ho: Department of Mathematics, Stanford University, Stanford, CA 94305, USA, Nicolette~Meshkat: Department of Mathematics, Box 8205, North Carolina State University, Raleigh, NC, 27695-8205, USA and Department of Mathematics and Computer Science, 500 El Camino Real, Santa Clara University, Santa Clara, CA, 95053}
\begin{document}
\maketitle

%

\begin{abstract}
We present a method for rejecting competing models from noisy time-course data that does not rely on parameter inference. First we characterize ordinary differential equation models in only measurable variables using differential algebra elimination. Next we extract additional information from the given data using Gaussian Process Regression (GPR) and then transform the differential invariants. We develop a test using linear algebra and statistics to reject transformed models with the given data in a parameter-free manner. This algorithm exploits the information about transients that is encoded in the model's structure. We demonstrate the power of this approach by discriminating between different models from mathematical biology.
%
%
%
\end{abstract}

keywords: Model selection, differential algebra, algebraic statistics, mathematical biology


\thispagestyle{plain}

\section{Introduction}


Given competing mathematical models to describe a process, we wish to know whether our data is compatible with the candidate models. Often comparing models requires optimization and fitting time course data to estimate parameter values and then applying an information criterion to select a `best' model \cite{akaike}. However sometimes it is not feasible to estimate the value of these unknown parameters (e.g. large parameter space, nonlinear objective function, nonidentifiable etc). 

The parameter problem has motivated the growth of fields that embrace a parameter-free flavour such as chemical reaction network theory and stoichiometric theory \cite{feinberg:1987:chem-eng-sci,feinberg:1988:chem-eng-sci,clarke:1988:cell-biophys}.  
However many of these approaches are limited to comparing the behavior of models at steady-state \cite{gunawardena:2007:biophys-j,manrai:2008:biophys-j,conradi:2005:iee-proc-syst-biol}. Inspired by techniques commonly used in applied algebraic geometry \cite{Cox} and algebraic statistics \cite{algstat}, methods for discriminating between models without estimating parameters has been developed for steady-state data \cite{harrington-pnas-2012}, applied to models in Wnt signaling \cite{maclean2015,gross:2016:bmb}, and then generalized to only include one data point \cite{gross:2016:bmb,gross:2015:NAG}. Briefly, these approaches characterize a model $f(x_1,\ldots,x_N, p_1, \ldots p_R)$ in only observable variables $g(x_1,\ldots,x_M, p_1, \ldots p_R)$ using techniques from computational algebraic geometry and tests whether the steady-state data are coplanar with this new characterization of the model, called a {\it steady-state invariant} \cite{gunawardena:2007:biophys-j}. Notably the method doesn't require parameter estimation, and also includes a statistical cut-off for model compatibility with noisy data.

Here, we present a method for comparing models with {\it time course data} via computing a {\it differential invariant}.
We consider models of the form $\bf{\dot{x}}(t) = \textbf{f}(\textbf{x}(t),\textbf{u}(t),\textbf{p})$ and $\textbf{y}(t)=\textbf{g}(\textbf{x}(t),\textbf{p})$ where $u_i(t)$ is a known input into the system, $i=1,...,L$, $y_j(t)$ is a known output (measurement) from the system, $j=1,...,M$, $x_k(t)$ are species variables, $k=1,...,N$, $\textbf{p}$ is the unknown $R-$dimensional parameter vector, and the functions $\textbf{f},\textbf{g}$ are rational functions of their arguments.  The dynamics of the model can be observed in terms of a time series where $\textbf{u}(t)$ is the input at discrete points and $\textbf{y}(t)$ is the output.

In this setting, we aim to characterize our ODE models by eliminating variables we cannot measure using differential elimination from differential algebra. From the elimination, we form a differential invariant, where the differential monomials have coefficients that are functions of the parameters $p_1,\ldots, p_R$.  We obtain a system of equations in 0,1, and higher order derivatives and we write this implicit system of equations as $F_j(\textbf{u}, \bf{\dot{u}},\bf{\ddot{u}},\bf{\dddot{u}},\ldots, \textbf{y}, \bf{\dot{y}},\bf{\ddot{y}},\bf{\dddot{y}},\ldots)=0$, $j=1,...,M$, and call these the input-output equations our {\it differential invariants}.
Specifically, we have equations of the form:
$$ \sum_{i}{c_{i}(\textbf{p})\psi_{i}(\textbf{u},\textbf{y})}=0 $$
where $c_{i}(\textbf{p})$ are rational functions of the parameters and $\psi_{i}(\textbf{u},\textbf{y})$ are differential monomials, i.e. monomials in $\textbf{u}, \bf{\dot{u}},\bf{\ddot{u}},\bf{\dddot{u}},\ldots, \textbf{y}, \bf{\dot{y}},\bf{\ddot{y}},\bf{\dddot{y}},\ldots$.  We will see shortly that in the linear case, $F_j$ is a linear differential equation. For non-linear models, $F_j$ is nonlinear.  

If we substitute into the differential invariant available data into the observable monomials for each of the time points, we can form a linear system of equations (each row is a different time point).  Then we ask: does there exist a $\kappa$ such that $A \kappa = b$. If $b = 0$ of course we are guaranteed a zero trivial solution and the non-trivial case can be determined via a rank test (i.e., SVD) and can perform the statistical criterion developed in \cite{harrington-pnas-2012} with the bound improved in \cite{maclean2015}, but for $A \kappa = b$ there may be no solutions.  Thus, we must check if the linear system of equations $A \kappa = b$ is consistent, i.e. has one or infinitely many solutions. Assuming measurement noise is known, we derive a statistical cut-off for when the model is incompatible with the data.

However suppose that one does not have data points for the higher order derivative data, then these need to be estimated. We present a method using Gaussian Process Regression (GPR) to estimate the time course data using a GPR. Since the derivative of a GP is also GP, so we can estimate the higher order derivative of the data as well as the measurement noise introduced and estimate the error introduced during the GPR (so we can discard points with too much GPR estimation error). 
This enables us to input derivative data into the differential invariant and test model compatibility using the solvability test with the statistical cut-off we present.

We showcase our method throughout with examples from linear and nonlinear models.

\par

\section{Differential Elimination}

We now give some background on differential algebra since a crucial step in our algorithm is to perform differential elimination to obtain equations purely in terms of input variables, output variables, and parameters.  For this reason, we will only give background on the ideas from differential algebra required to understand the differential elimination process.  For a more detailed description of differential algebra and the algorithms listed below, see \cite{Aistleitner,Kolchin,Ritt}.  In what follows, we assume the reader is familiar with concepts such as \textit{rings} and \textit{ideals}, which are covered in great detail in \cite{Cox}.

\begin{defn} A ring $S$ is said to be a \textit{differential ring} if there is a derivative defined on $S$ and $S$ is closed under differentiation.  A \textit{differential ideal} is an ideal which is closed under differentiation.
\end{defn}

A useful description of a differential ideal is called a \textit{differential characteristic set}, which is a finite description of a possibly infinite set of differential polynomials.  We give the technical definition from \cite{Ritt}:

\begin{defn} Let $\Sigma$ be a set of differential polynomials, not necessarily finite.  If $A \subset \Sigma$ is an auto-reduced set, such that no lower ranked auto-reduced set can be formed in $\Sigma$, then $A$ is called a \textit{differential characteristic set}.
\end{defn}

A well-known fact in differential algebra is that differential ideals need not be finitely generated \cite{Kolchin,Ritt}.  However, a radical differential ideal is finitely generated by the \textit{Ritt-Raudenbush basis theorem} \cite{Kaplansky}.  This result gives rise to Ritt's pseudodivision algorithm (see below), allowing us to compute the differential characteristic set of a radical differential ideal.  We now describe various methods to find a differential characteristic set and other related notions, and we describe why they are relevant to our problem, namely, they can be used to find the \textit{input-output equations}.

Consider an ODE system of the form $\bf{\dot{x}}(t) = \textbf{f}(\textbf{x}(t),\textbf{p},\textbf{u}(t))$ and $y_j(t)=g_j(\textbf{x}(t),\textbf{p})$ for $j=1,...,M$ with $\textbf{f}$ and $\textbf{g}$ rational functions of their arguments.  Let our differential ideal be generated by the differential polynomials obtained by subtracting the right-hand-side from the ODE system to obtain $\bf{\dot{x}}(t) - \textbf{f}(\textbf{x}(t),\textbf{p},\textbf{u}(t))$ and $y_j(t) - g_j(\textbf{x}(t),\textbf{p})$ for $j=1,...,M$.  Then a differential characteristic set is of the form \cite{Saccomani}:
\begin{align*}
A_1(\textbf{u},\textbf{y})&,...,A_M(\textbf{u},\textbf{y})\\
A_{M+1}&(\textbf{u},\textbf{y},x_1)\\
A_{M+2}&(\textbf{u},\textbf{y},x_1,x_2)\\
&...\\
A_{M+N}&(\textbf{u},\textbf{y},x_1,...,x_N)
\end{align*}
The first $M$ terms of the differential characteristic set, $A_1(\textbf{u},\textbf{y}),...,A_M(\textbf{u},\textbf{y})$, are those terms independent of the state variables and when set to zero form the \textit{input-output equations}:
 $$\textbf{F}(\textbf{u}, \bf{\dot{u}}, \bf{\ddot{u}}, \bf{\dddot{u}},\ldots, \textbf{y}, \bf{\dot{y}}, \bf{\ddot{y}}, \bf{\dddot{y}},\ldots)=\textbf{0}.$$  
Specifically, the $M$ input-output equations $\textbf{F}(\textbf{u}, \bf{\dot{u}}, \bf{\ddot{u}}, \bf{\dddot{u}},\ldots, \textbf{y}, \bf{\dot{y}}, \bf{\ddot{y}}, \bf{\dddot{y}},\ldots)=\textbf{0}$ are polynomial equations in the variables $\textbf{u}, \bf{\dot{u}}, \bf{\ddot{u}}, \bf{\dddot{u}},\ldots, \textbf{y}, \bf{\dot{y}}, \bf{\ddot{y}}, \bf{\dddot{y}},\ldots$ with rational coefficients in the parameter vector $\textbf{p}$.  Note that the differential characteristic set is in general non-unique, but the coefficients of the input-output equations can be fixed uniquely by normalizing the equations to make them monic.

We now discuss several methods to find the input-output equations.  The first method (Ritt's pseudodivision algorithm) can be used to find a differential characteristic set for a radical differential ideal.  The second method (RosenfeldGroebner) gives a representation of the radical of the differential ideal as an intersection of regular differential ideals and can also be used to find a differential characteristic set under certain conditions \cite{Boulier2, Golubitsky}.  Finally, we discuss Gr\"{o}bner basis methods to find the \textit{input-output equations}.

\subsection{Ritt's pseudodivision algorithm}

A differential characteristic set of a prime differential ideal is a set of generators for the ideal \cite{Forsman}.  An algorithm to find a differential characteristic set of a radical (in particular, prime) differential ideal generated by a finite set of differential polynomals is called Ritt's pseudodivision algorithm.  We describe the process in detail below, which comes from the description in \cite{Saccomani}.  Note that our differential ideal as described above is a prime differential ideal \cite{Diop,Ritt}.

Let $u_j$ be the leader of a polynomial $A_j$, which is the highest ranking derivative of the variables appearing in that polynomial.  A polynomial $A_i$ is said to be of \textit{lower rank} than $A_j$ if $u_i < u_j$ or, whenever $u_i = u_j$, the algebraic degree of the leader of $A_i$ is less than the algebraic degree of the leader of $A_j$.  A polynomial $A_i$ is \textit{reduced with respect to a polynomial} $A_j$ if $A_i$ contains neither the leader of $A_j$ with equal or greater algebraic degree, nor its derivatives.  If $A_i$ is not reduced with respect to $A_j$, it can be reduced by using the pseudodivision algorithm below.    

\begin{enumerate}
\item If $A_i$ contains the $k^{th}$ derivative $u_j^{(k)}$ of the leader of $A_j$, $A_j$ is differentiated $k$ times so its leader becomes $u_j^{(k)}$.
\item Multiply the polynomial $A_i$ by the coefficient of the highest power of $u_j^{(k)}$; let $R$ be the remainder of the division of this new polynomial by $A_j^{(k)}$ with respect to the variable $u_j^{(k)}$.  Then $R$ is reduced with respect to $A_j^{(k)}$.  The polynomial $R$ is called the \textit{pseudoremainder} of the pseudodivision.
\item The polynomial $A_i$ is replaced by the pseudoremainder $R$ and the process is iterated using $A_j^{(k-1)}$ in place of $A_j^{(k)}$ and so on, until the pseudoremainder is reduced with respect to $A_j$.
\end{enumerate}

This algorithm is applied to a set of differential polynomials, such that each polynomial is reduced with respect to each other, to form an auto-reduced set.  The result is a differential characteristic set.

\subsection{RosenfeldGroebner}

Using the {\tt DifferentialAlgebra} package in Maple, one can find a representation of the radical of a differential ideal generated by some equations, as an intersection of radical differential ideals with respect to a given ranking and rewrites a prime differential ideal using a different ranking \cite{Maple}.  Specifically, the {\tt RosenfeldGroebner} command in Maple takes two arguments: {\tt sys} and {\tt R}, where {\tt sys} is a list of set of differential equations or inequations which are all rational in the independent and dependent variables and their derivatives and {\tt R} is a differential polynomial ring built by the command {\tt DifferentialRing} specifying the independent and dependent variables and a ranking for them \cite{Maple}.  Then {\tt RosenfeldGroebner} returns a representation of the radical of the differential ideal generated by {\tt sys}, as an intersection of radical differential ideals saturated by the multiplicative family generated by the inequations found in {\tt sys}.  This representation consists of a list of regular differential chains with respect to the ranking of {\tt R}.  Note that {\tt RosenfeldGroebner} returns a differential characteristic set if the differential ideal is prime \cite{Boulier2}.

\subsection{Gr\"{o}bner basis methods}

Finally, both algebraic and differential Gr\"{o}bner bases can be employed to find the input-output equations.  To use an algebraic Gr\"{o}bner basis, one can take a sufficient number of derivatives of the model equations and then treat the derivatives of the variables as indeterminates in the polynomial ring in $\textbf{x}$, $\bf{\dot{x}}$, $\bf{\ddot{x}}$,..., $\textbf{u}$, $\bf{\dot{u}}$, $\bf{\ddot{u}}$,..., $\textbf{y}$, $\bf{\dot{y}}$, $\bf{\ddot{y}}$,..., etc.  Then a Gr\"{o}bner basis of the ideal generated by this full system of (differential) equations with an elimination ordering where the state variables and their derivatives are eliminated first can be found.  Details of this approach can be found in \cite{Meshkat2012}.  Differential Gr\"{o}bner bases have been developed by Carr\`{a} Ferro \cite{CarraFerro}, Ollivier \cite{Ollivier2}, and Mansfield \cite{Mansfield}, but currently there are no implementations in computer algebra systems \cite{Aistleitner}.

\subsection{Model rejection using differential invariants}

We now discuss how to use the differential invariants obtained from differential elimination (using Ritt's pseudodivision, differential Groebner bases, or some other method) for model selection/rejection.

Recall our input-output relations, or differential invariants, are of the form:
$$ \sum_{i}{c_{i}(\textbf{p})\psi_{i}(\textbf{u},\textbf{y})}=0 $$
The functions $\psi_{i}(\textbf{u},\textbf{y})$ are differential monomials, i.e. monomials in the input/output variables $\textbf{u}, \bf{\dot{u}},\bf{\ddot{u}},\bf{\dddot{u}}$, $\ldots$, $\textbf{y}, \bf{\dot{y}},\bf{\ddot{y}},\bf{\dddot{y}},\ldots$, etc, and the functions $c_{i}(\textbf{p})$ are rational functions in the unknown parameter vector $\textbf{p}$.  In order to uniquely fix the rational coefficients $c_{i}(\textbf{p})$ to the differential monomials $\psi_{i}(\textbf{u},\textbf{y})$, we normalize each input/output equation to make it monic.  In other words, we can re-write our input-output relations as:
$$ \sum_{i}{\tilde{c}_{i}(\textbf{p})\psi_{i}(\textbf{u},\textbf{y})}= \xi (\textbf{u},\textbf{y}) $$
Here $\xi (\textbf{u},\textbf{y})$ is a differential polynomial in the input/output variables $\textbf{u}, \bf{\dot{u}},\bf{\ddot{u}},\bf{\dddot{u}}$, $\ldots$, $\textbf{y}, \bf{\dot{y}},\bf{\ddot{y}},\bf{\dddot{y}},\ldots$, etc.  If the values of $\textbf{u}, \bf{\dot{u}},\bf{\ddot{u}},\bf{\dddot{u}}$,$\ldots$, $\textbf{y}, \bf{\dot{y}},\bf{\ddot{y}},\bf{\dddot{y}},\ldots$, etc, were known at a sufficient number of time instances $t_1,t_2,...,t_m$, then one could substitute in values of $\psi_{i}(\textbf{u},\textbf{y})$ and $\xi (\textbf{u},\textbf{y})$ at each of these time instances to obtain a linear system of equations in the variables $\tilde{c}_{i}(\textbf{p})$.  

First consider the case of a single input-output equation.  If there are $n$ unknown coefficients $\tilde{c}_{i}(\textbf{p})$, we obtain the system:
\begin{align*}
 \tilde{c}_{1}(\textbf{p})\psi_{1}(\textbf{u}(t_1),\textbf{y}(t_1))+...+&\tilde{c}_{n}(\textbf{p})\psi_{n}(\textbf{u}(t_1),\textbf{y}(t_1))= \xi (\textbf{u}(t_1),\textbf{y}(t_1)) \\
&...\\
 \tilde{c}_{1}(\textbf{p})\psi_{1}(\textbf{u}(t_m),\textbf{y}(t_m))+...+&\tilde{c}_{n}(\textbf{p})\psi_{n}(\textbf{u}(t_m),\textbf{y}(t_m))= \xi (\textbf{u}(t_m),\textbf{y}(t_m)) \\
\end{align*}

We write this linear system as $A \kappa = b$, where $A$ is an $m$ by $n$ matrix of the form:
$$
\begin{pmatrix}
\psi_{1}(\textbf{u}(t_1),\textbf{y}(t_1)) & ... & \psi_{n}(\textbf{u}(t_1),\textbf{y}(t_1)) \\
\vdots & \vdots & \vdots \\
\psi_{1}(\textbf{u}(t_m),\textbf{y}(t_m)) & ... & \psi_{n}(\textbf{u}(t_m),\textbf{y}(t_m))
\end{pmatrix}
$$
$\kappa$ is the vector of unknown coefficients $[\tilde{c}_{1}(\textbf{p}),...,\tilde{c}_{n}(\textbf{p})]^T$, and $b$ is of the form $[\xi (\textbf{u}(t_1),\textbf{y}(t_1)), ..., \xi (\textbf{u}(t_m),\textbf{y}(t_m))]^T$.  

For the case of multiple input-output equations, we get the following block diagonal system of equations $A \kappa = b$:
$$
\begin{pmatrix}
A_1 & 0 & 0 & \ldots & 0 \\
0 & A_2 & 0 & \ldots & 0 \\
\vdots & \vdots & \ddots & \vdots & \vdots \\
0 & 0 & 0 & \ldots & A_M
\end{pmatrix}
\begin{pmatrix}
\kappa_1 \\
\kappa_2 \\
\vdots \\
\kappa_M
\end{pmatrix}
=
\begin{pmatrix}
b_1 \\
b_2 \\
\vdots \\
b_M
\end{pmatrix}
$$
where $A$ is a $m=m_1+...+m_M$ by $n=n_1+...+n_M$ matrix.

For noise-free (perfect) data, this system $A \kappa = b$ should have a unique solution for $\kappa$ \cite{Ljung}.  In other words, the coefficients $\tilde{c}_{i}(\textbf{p})$ of the input-output equations can be uniquely determined from enough input/output data \cite{Ljung}. 

The main idea of this paper is the following.  Given a set of candidate models, we find their associated differential invariants and then substitute in values of $\textbf{u}, \bf{\dot{u}},\bf{\ddot{u}},\bf{\dddot{u}},\ldots, \textbf{y}, \bf{\dot{y}},\bf{\ddot{y}},\bf{\dddot{y}},\ldots$, etc, 
at many time instances $t_1,...,t_m$, thus setting up the linear system $A \kappa = b$ for each model.  The solution to $A \kappa = b$ should be unique for the correct model, but there should be no solution for each of the incorrect models.  Thus under ideal circumstances, one should be able to select the correct model since the input/output data corresponding to that model should satisfy its differential invariant.  Likewise, one should be able to reject the incorrect models since the input/output data should not satisfy their differential invariants.

However, with imperfect data, there could be no solution to $A \kappa = b$ even for the correct model.  Thus, with imperfect data, one may be unable to select the correct model.  On the other hand, if there is no solution to $A \kappa = b$ for each of the candidate models, then the goal is to determine how ``badly'' each of the models fail and reject models accordingly.  We now describe criteria to reject models.

\section{Solvability of linear system $A \kappa = B$}

 Let $A \in \mathbb{R}^{m \times n}$ and consider the linear system
 \begin{align}
  A\kappa = B,
  \label{eqn:linear-system}
 \end{align}
 where $B \in \mathbb{R}^{m \times r}$.  Note, in our case, $r=1$, so $B$ is just the vector $b$.  Here, we study the solvability of \eqref{eqn:linear-system} under (a specific form of) perturbation of both $A$ and $B$. Let $\tilde{A}$ and $\tilde{B}$ denote the perturbed versions of $A$ and $B$, respectively, and assume that $\tilde{A} - A$ and $\tilde{B} - B$ depend only on $\tilde{A}$ and $\tilde{B}$, respectively. Our goal is to infer the {\em unsolvability} of the unperturbed system \eqref{eqn:linear-system} from observation of $\tilde{A}$ and $\tilde{B}$ only.
 
 We will describe how to detect the rank of an augmented matrix, but first introduce notation. The singular values of a matrix $A \in \mathbb{R}^{m \times n}$ will be denoted by
 \begin{align*}
  \sigma_{1} (A) \geq \cdots \geq \sigma_{\ell} (A) \geq \sigma_{\ell + 1} (A) = \cdots = \sigma_{n} (A) = 0, \quad \ell = \min (m, n).
 \end{align*}
 (Note that we have trivially extended the number of singular values of $A$ from $\ell$ to $n$.) The rank of $A$ is written $\rank (A)$. The range of $A$ is denoted $\range (A)$. Throughout, $\| \cdot \|$ refers to the Euclidean norm.

 The basic strategy will be to assume as a null hypothesis that \eqref{eqn:linear-system} has a solution, i.e., $B \in \range (A)$, and then to derive its consequences in terms of $\tilde{A}$ and $\tilde{B}$. If these consequences are not met, then we conclude by contradiction that \eqref{eqn:linear-system} is unsolvable. In other words, we will provide {\em sufficient but not necessary} conditions for \eqref{eqn:linear-system} to have no solution, i.e., we can only reject (but not confirm) the null hypothesis. We will refer to this procedure as {\em testing} the null hypothesis.

 \subsection{Preliminaries}
 We first collect some useful results. The first, Weyl's inequality, is quite standard.

 \begin{theorem}[Weyl's inequality]
  Let $A, B \in \mathbb{R}^{m \times n}$. Then
  \begin{align*}
   |\sigma_{k} (A) - \sigma_{k} (B)| \leq \| A - B \|, \quad k = 1, \dots, n.
  \end{align*}
 \end{theorem}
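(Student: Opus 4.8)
\section*{Proof proposal}

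The plan is to prove Weyl's inequality directly from the Courant--Fischer (min-max) variational characterization of the singular values, which avoids any passage through a Hermitian dilation. Recall that for $A \in \mathbb{R}^{m \times n}$ the $k$-th singular value admits the representation
\begin{align*}
 \sigma_k(A) = \max_{\substack{S \subseteq \mathbb{R}^n \\ \dim S = k}} \ \min_{\substack{x \in S \\ \|x\| = 1}} \|Ax\|,
\end{align*}
where the maximum runs over all $k$-dimensional subspaces $S$. I would take this formula as the starting point: it follows from the singular value decomposition and is the single structural fact I would either cite or establish in a short preliminary lemma.

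The core estimate is a one-line application of the triangle inequality for the operator norm. For any vector $x$ we have $\|Ax\| \geq \|Bx\| - \|(A - B)x\| \geq \|Bx\| - \|A - B\|\,\|x\|$, using that the Euclidean norm $\|A - B\|$ is exactly the operator norm. Now let $S^{\star}$ be a $k$-dimensional subspace achieving the maximum in the characterization of $\sigma_k(B)$. Restricting to unit vectors $x \in S^{\star}$ and taking the minimum over this subspace gives
\begin{align*}
 \min_{\substack{x \in S^{\star} \\ \|x\| = 1}} \|Ax\| \geq \min_{\substack{x \in S^{\star} \\ \|x\| = 1}} \|Bx\| - \|A - B\| = \sigma_k(B) - \|A - B\|,
\end{align*}
where the equality is the defining property of $S^{\star}$, and the constant $\|A - B\|$ pulls out of the minimum since it does not depend on $x$. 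Because $S^{\star}$ is one particular competitor in the maximization defining $\sigma_k(A)$, the left-hand side is bounded above by $\sigma_k(A)$; hence $\sigma_k(A) \geq \sigma_k(B) - \|A - B\|$, i.e. $\sigma_k(B) - \sigma_k(A) \leq \|A - B\|$.

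Finally I would invoke symmetry: the entire argument is unchanged under interchanging the roles of $A$ and $B$, and $\|A - B\| = \|B - A\|$, which yields $\sigma_k(A) - \sigma_k(B) \leq \|A - B\|$ as well. Combining the two one-sided bounds gives $|\sigma_k(A) - \sigma_k(B)| \leq \|A - B\|$ for each $k = 1, \dots, n$, as claimed. I do not anticipate a serious obstacle; the only point demanding care is the bookkeeping in the min-max step (using the same subspace dimension $k$ for both matrices, and checking that the trivially extended zero singular values for $k > \ell = \min(m,n)$ are covered, where the inequality is immediate). Were a fully self-contained treatment required, the genuine work would shift to proving the Courant--Fischer formula itself from the SVD, but that is entirely standard and I would relegate it to a cited lemma.
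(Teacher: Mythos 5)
Your proof is correct. Note, though, that the paper itself offers no proof of this theorem at all: it is stated as a ``quite standard'' preliminary and implicitly cited to the literature, so there is no argument in the paper to compare yours against. Your route --- the Courant--Fischer max-min characterization $\sigma_k(A) = \max_{\dim S = k} \min_{x \in S,\, \|x\|=1} \|Ax\|$, the pointwise bound $\|Ax\| \geq \|Bx\| - \|A-B\|\,\|x\|$, evaluation on the optimal subspace $S^{\star}$ for $B$, and symmetry --- is sound, and you handle the two points that usually trip people up: the constant $\|A-B\|$ pulling out of the minimum, and the trivially extended zero singular values for $k > \min(m,n)$ (where, as you say, the inequality is immediate; in fact the max-min formula still returns $0$ there by a dimension count against $\ker A$). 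The other classical proof proceeds via the Hermitian dilation $\left[\begin{smallmatrix} 0 & A \\ A^{\trans} & 0 \end{smallmatrix}\right]$, whose eigenvalues are $\pm\sigma_i(A)$, followed by Weyl's eigenvalue perturbation inequality; your approach avoids doubling the dimension and keeps everything in terms of singular values, at the modest cost of taking the singular-value min-max formula as the cited black box rather than its eigenvalue counterpart. Either choice would serve the paper's purposes, since all that is used downstream (Corollary 3.2 and Theorem 3.3) is the inequality itself.
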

 Weyl's inequality can be used to test $\rank (A)$ using knowledge of only $\tilde{A}$.

 \begin{corollary}
  Let $A, \tilde{A} \in \mathbb{R}^{m \times n}$ and assume that $\rank (A) < k$. Then
  \begin{align}
   \sigma_{k} (\tilde{A}) \leq \| \tilde{A} - A \|.
   \label{eqn:weyl-rank}
  \end{align}
  \label{cor:weyl-rank}
 \end{corollary}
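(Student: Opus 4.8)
The plan is to obtain this corollary as an immediate consequence of Weyl's inequality once the rank hypothesis is translated into a statement about singular values. The key observation is that the rank of a matrix equals the number of its nonzero singular values. Hence, under the ordering $\sigma_{1}(A) \geq \cdots \geq \sigma_{n}(A)$, the assumption $\rank(A) < k$ means that at most $k - 1$ of these singular values can be nonzero, and therefore $\sigma_{k}(A) = 0$. This is the only place where the hypothesis is used, and it is what makes the bound clean.

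With this in hand, I would apply Weyl's inequality to the pair $(A, \tilde{A})$ at index $k$, which gives
\begin{align*}
 |\sigma_{k}(\tilde{A}) - \sigma_{k}(A)| \leq \| \tilde{A} - A \|.
\end{align*}
Substituting $\sigma_{k}(A) = 0$ and using that singular values are nonnegative, so that $\sigma_{k}(\tilde{A}) = |\sigma_{k}(\tilde{A}) - 0|$, yields $\sigma_{k}(\tilde{A}) \leq \| \tilde{A} - A \|$, which is exactly \eqref{eqn:weyl-rank}.

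There is no substantial obstacle here; the argument is essentially a one-line specialization of Weyl's inequality. The only point worth checking is that the index $k$ lies in the admissible range $1 \leq k \leq n$ so that $\sigma_{k}$ is defined under the paper's convention of extending the singular values to $n$ by appending zeros; this is consistent with the hypothesis, since $\rank(A) \leq \min(m, n) \leq n$ forces any $k$ with $\rank(A) < k$ to satisfy $k \leq n$ in the relevant cases. The interpretive content, rather than the proof, is the main payoff: the observable quantity $\sigma_{k}(\tilde{A})$, computable from the perturbed data alone, is controlled by the perturbation magnitude $\| \tilde{A} - A \|$ whenever the unperturbed matrix has rank below $k$, which is precisely the form needed to test the rank of an augmented matrix from noisy observations.
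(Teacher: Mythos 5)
Your proof is correct and matches the paper's intended argument: the paper states this corollary without an explicit proof, treating it as an immediate consequence of Weyl's inequality, and your argument---$\rank(A) < k$ forces $\sigma_{k}(A) = 0$, then apply Weyl's inequality at index $k$---is exactly that one-line specialization spelled out.
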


 Therefore, if \eqref{eqn:weyl-rank} is not satisfied, then $\rank (A) \geq k$.

 \subsection{Augmented matrix}
 \label{sec:augmented-matrix}
 Assume the null hypothesis. Then $B \in \range (A)$, so $\rank ([A, B]) = \rank (A) \leq \min (m, n)$. Therefore, $\sigma_{n + 1} ([A, B]) = 0$. But we do not have access to $[A, B]$ and so must consider instead the perturbed augmented matrix $[\tilde{A}, \tilde{B}]$.

 \begin{theorem}
  Under the null hypothesis,
  \begin{align}
   \sigma_{n + 1} ([\tilde{A}, \tilde{B}]) \leq \| [\tilde{A} - A, \tilde{B} - B] \| \leq \| \tilde{A} - A \| + \| \tilde{B} - B \|.
   \label{eqn:augmented-sigma}
  \end{align}
  \label{thm:augmented-matrix}
 \end{theorem}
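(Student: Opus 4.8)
The plan is to obtain the first inequality by applying Corollary~\ref{cor:weyl-rank} not to $A$ but to the augmented matrices $[A,B]$ and $[\tilde A,\tilde B]$, and to obtain the second inequality by a direct estimate of the spectral norm of a horizontally concatenated block. I would begin with the rank bookkeeping supplied by the null hypothesis. Since $B \in \range(A)$, every column of $B$ is a linear combination of the columns of $A$, so adjoining $B$ cannot increase the rank: $\rank([A,B]) = \rank(A) \le \min(m,n) \le n$, hence $\rank([A,B]) < n+1$. (Here $[A,B] \in \mathbb{R}^{m \times (n+r)}$, with $r=1$ in our case, so it has at least $n+1$ columns; the relevant fact is only that its rank drops strictly below the index $n+1$.) Applying Corollary~\ref{cor:weyl-rank} with $A$ and $\tilde A$ there taken to be $[A,B]$ and $[\tilde A,\tilde B]$, and with $k = n+1$, then yields
$$\sigma_{n+1}([\tilde A, \tilde B]) \le \bigl\| [\tilde A, \tilde B] - [A,B] \bigr\|.$$
Because concatenation is linear in its blocks, $[\tilde A,\tilde B] - [A,B] = [\tilde A - A, \tilde B - B]$, which is exactly the first inequality of the claim.

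It remains to prove $\|[\tilde A - A, \tilde B - B]\| \le \|\tilde A - A\| + \|\tilde B - B\|$, which is the one genuine computation. Writing $E = \tilde A - A$ and $F = \tilde B - B$, I would use that the spectral norm of a symmetric positive semidefinite matrix equals its largest eigenvalue, so $\|[E,F]\|^2 = \|[E,F][E,F]^\trans\| = \|EE^\trans + FF^\trans\|$. The triangle inequality then gives $\|EE^\trans + FF^\trans\| \le \|EE^\trans\| + \|FF^\trans\| = \|E\|^2 + \|F\|^2$, whence $\|[E,F]\| \le \sqrt{\|E\|^2 + \|F\|^2} \le \|E\| + \|F\|$, the final step following from $(\|E\| + \|F\|)^2 \ge \|E\|^2 + \|F\|^2$. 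This chain in fact proves the sharper Euclidean bound $\sqrt{\|E\|^2 + \|F\|^2}$ en route, but the additive form stated in the theorem suffices for the intended rejection criterion.

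I do not anticipate a substantive obstacle: the proof is a two-line reduction to an already-established corollary followed by an elementary block-norm estimate. The only points that demand care are the dimension bookkeeping—ensuring the relevant index is $n+1$ and that $\rank([A,B])$ lies strictly below it—and correctly identifying $\|[E,F]\|^2$ with $\|EE^\trans + FF^\trans\|$ (the $m \times m$ product) rather than with $\|E^\trans E + F^\trans F\|$, so that the triangle inequality is applied to the right symmetric matrix.
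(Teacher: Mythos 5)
Your proof is correct and follows essentially the same route as the paper: the paper's entire proof is ``Apply Corollary~\ref{cor:weyl-rank}'' to the augmented matrix, with the rank observation $\rank([A,B]) = \rank(A) \leq \min(m,n)$ under the null hypothesis supplied in the surrounding text, exactly as you do. Your explicit verification of the block-norm estimate $\|[E,F]\| \leq \sqrt{\|E\|^{2} + \|F\|^{2}} \leq \|E\| + \|F\|$ fills in a step the paper leaves implicit (and in fact yields a slightly sharper bound than the stated additive one), but this does not constitute a different approach.
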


 \begin{proof}
  Apply Corollary \ref{cor:weyl-rank}.
 \end{proof}

 In other words, if \eqref{eqn:augmented-sigma} does not hold, then \eqref{eqn:linear-system} has no solution.

 \begin{remark}
  This approach can fail to correctly reject the null hypothesis if $A$ is (numerically) low-rank. As an example, suppose that $\rank (A) < n$ and let $B \notin \range (A)$ consist of a single vector ($p = 1$). Then $\rank ([A, B]) \leq n$, so $\sigma_{n + 1} ([A, B]) = 0$ (or is small). Assuming that $\| \tilde{A} - A \|$ and $\| \tilde{B} - B \|$ are small, $\sigma_{n + 1} ([\tilde{A}, \tilde{B}])$ will hence also be small.
 \end{remark}

 \begin{remark}
  In principle, we should test directly the assertion that $\rank ([A, B]) = \rank (A)$. However, we can only establish lower bounds on the matrix rank (we can only tell if a singular value is ``too large''), so this is not feasible in practice. An alternative approach is to consider only {\em numerical} ranks obtained by thresholding. How to choose such a threshold, however, is not at all clear and can be a very delicate matter especially if the data have high dynamic range.
 \end{remark}

 \begin{remark}
  The theorem is uninformative if $m \leq n$ since then $\sigma_{n + 1} ([A, B]) = \sigma_{n + 1} (\tilde{A}, \tilde{B}) = 0$ trivially. However, this is not a significant disadvantage beyond that described above since if $A$ is full-rank, then it must be true that \eqref{eqn:linear-system} is solvable.
 \end{remark}

\subsection{Example: Perfect data}
As a proof of principle, we first apply Theorem \ref{thm:augmented-matrix} to a simple linear model.  We start by taking perfect input and output data and then add a specific amount of noise to the output data and attempt to reject the incorrect model.  In the subsequent sections, we will see how to interpret Theorem \ref{thm:augmented-matrix} statistically  under a particular ``noise'' model for the perturbations.

Here, we take data from a linear 3-compartment model, add noise, and try to reject the general form of the linear 2-compartment model with the same input/output compartments.

\begin{ex} \label{ex:mainex} 
Let our model be a 3-compartment model of the following form:
$$
\begin{pmatrix} 
\dot{x}_1 \\
\dot{x}_2 \\
\dot{x}_3 \end{pmatrix} = {\begin{pmatrix} 
-2 & 1 & 0 \\
1 & -3 & 1 \\
0 & 1 & -2  
\end{pmatrix}} {\begin{pmatrix}
x_1 \\
x_2 \\
x_3 \end{pmatrix} } + {\begin{pmatrix}
2e^{-3t} + 12e^{-5t} \\
0 \\
0 \end{pmatrix}}, \quad y=x_1$$
$$x_1(0)=1,x_2(0)=7,x_3(0)=9$$
Here we have an input to the first compartment of the form $u_1=2e^{-3t} + 12e^{-5t}$ and the first compartment is measured, so that $y=x_1$ represents the output.  The solution to this system of ODEs can be easily found of the form:
$$
\begin{pmatrix} 
x_1 \\
x_2 \\
x_3 \end{pmatrix} = {7\begin{pmatrix} 
1 \\
1 \\
1  
\end{pmatrix} e^{-t}} + {\begin{pmatrix}
-1 \\
0 \\
1 \end{pmatrix} e^{-2t}} + {\begin{pmatrix}
1 \\
-2 \\
1 \end{pmatrix}e^{-4t}} + {\begin{pmatrix}
-1 \\
-1 \\
1 \end{pmatrix}e^{-3t}}+ {\begin{pmatrix}
-5 \\
3 \\
-1 \end{pmatrix}e^{-5t}}
$$

\end{ex}
so that $y=7e^{-t}-e^{-2t}+e^{-4t}-e^{-3t}-5e^{-5t}$.

The input-output equation for a $3$ compartment model with a single input/output to the first compartment has the form:
$$\dddot{y}+c_1\ddot{y}+c_2\dot{y}+c_3y=\ddot{u}_1+c_4\dot{u}_1+c_5u_1$$
where $c_1,c_2,c_3$ are the coefficients of the characteristic polynomial of the matrix $A$ and $c_4,c_5$ are the coefficients of the characteristic polynomial of the matrix $A_1$ which has the first row and first column of $A$ removed.

We now substitute values of $u_1,\dot{u}_1,\ddot{u}_1,y,\dot{y},\ddot{y},\dddot{y}$ at time instances $t=0,0.2,0.4,0.6,0.8,1$ into our input-output equation and solved the resulting linear system of equations for $c_1,c_2,c_3,c_4,c_5$.  We get that $c_1=7,c_2=14,c_3=8,c_4=5,c_5=5$, which agrees with the coefficients of the characteristic polynomials of $A$ and $A_1$.  





We now attempt to reject the 2-compartment model using 3-compartment model data.  We find the input-output equations for a $2$ compartment model with a single input/output to the first compartment, which has the form:
$$\ddot{y}+c_2\dot{y}+c_3y=\dot{u}_1+c_5u_1$$
where again $c_2,c_3$ are the coefficients of the characteristic polynomial of the matrix $A$ and $c_5$ is the coefficient of the characteristic polynomial of the matrix $A_1$ which has the first row and first column of $A$ removed.  

We substitute values of $u_1,\dot{u}_1,y,\dot{y},\ddot{y}$ at time instances $t=0,0.2,0.4,0.6,0.8$ into our input-output equation and attempt to solve the resulting linear system of equations for $c_2,c_3,c_5$.  

The singular values for the matrix $A$ with the substituted values of $u_1,y,\dot{y}$ at time instances $t=0,0.2,0.4,0.6,0.8$ are:
$$24.7762, 7.10169, 0.0559192$$
The singular values of the matrix $(A|b)$ with the substituted values of $u_1,\dot{u}_1,y,\dot{y},\ddot{y}$ at time instances $t=0,0.2,0.4,0.6,0.8$ are:
$$57.1337, 7.13319, 0.279458, 0.00364017$$
We add noise to our matrix A in the following way.  To each entry $\dot{y}$, and $y$, we add $\epsilon k_{ij}$ where $k_{ij}$ is a random real number between $0$ and $1$, and $\epsilon$ equals $0.001$.  Then the noisy matrix $\tilde{A}$ has the following singular values:
$$24.7768, 7.10172, 0.0557071$$
We now add noise to our vector $b$ in the following way.  To each entry $\dot{u}_1-\ddot{y}$, we add $\epsilon k_{ij}$ where $k_{ij}$ is a random real number between $0$ and $1$, and $\epsilon$ equals $0.001$.  Then the noisy matrix $\tilde{(A|b)}$ has the following singular values:
$$57.1345, 7.13319, 0.279467, 0.00322281$$
We find the matrix $\tilde{(A|b)}-(A|b)$ and compare the norm of this matrix to the smallest singular value of $\tilde{(A|b)}$.  Since the Frobenius norm of $\tilde{(A|b)}-(A|b)$ is 
$0.0020603$, which is \textit{less than} the smallest singular value 
$0.00322281$, we can reject this model.  Thus, using noisy 3-compartment model data, we are able to reject the 2-compartment model.

 \section{Statistical inference}
 \label{sec:stats}

 We now consider the statistical inference of the solvability of \eqref{eqn:linear-system}. First, we need a noise model.

 \subsection{Noise model}
 If the perturbations $\| \tilde{A} - A \|$ and $\| \tilde{B} - B \|$ are bounded, e.g., $\| \tilde{A} - A \| \leq \epsilon \| \tilde{A} \|$ and $\| \tilde{B} - B \| \leq \epsilon \| \tilde{B} \|$ for some $\epsilon > 0$ (representing a relative accuracy of $\epsilon$ in the ``measurements'' $\tilde{A}$ and $\tilde{B}$), then Theorem \ref{thm:augmented-matrix} can be used at once. However, it is customary to model such perturbations as normal random variables, which are not bounded. Here, we will assume a noise model of the form
 \begin{align*}
  \tilde{A} - A = \epsilon C_{\tilde{A}} \circ Z, \quad \tilde{B} - B = \epsilon C_{\tilde{B}} \circ Z,
 \end{align*}
 where $C_{\tilde{A}}$ is a (computable) matrix that depends on $\tilde{A}$ and similarly with $C_{\tilde{B}}$, $A \circ B$ denotes the Hadamard (entrywise) matrix product $(A \circ B)_{ij} = A_{ij} B_{ij}$, and $Z$ is a matrix-valued random variable whose entries $Z_{ij} \sim \normdist (0, 1)$ are independent standard normals.

 In our application of interest, the entries of $C_{\tilde{A}}$ depend on those of $\tilde{A}$ as follows. Let $A_{ij} = \phi_{ij} (x)$ for some input vector $x$ but suppose that we can only observe the ``noisy'' vector $\tilde{x} = (1 + \epsilon Z) \circ x$. Then the corresponding perturbed matrix entries are
 \begin{align*}
  \tilde{A}_{ij} = \phi_{ij} (\tilde{x}) = \phi_{ij} (x) + \epsilon \sum_{k} (\nabla \phi_{ij} (x))_{k} \, x_{k} \, Z_{k} + O(\epsilon^{2}), \quad Z_{k} \sim \normdist (0, 1).
 \end{align*}
 By the additivity formula
 \begin{align}
  \sum_{k} a_{k} Z_{k} = \sqrt{\sum_{k} a_{k}^{2}} \, Z = \| a \| Z
  \label{eqn:gaussian-sum}
 \end{align}
 for standard Gaussians\footnote{There is an error in \cite{harrington-pnas-2012}, in which we incorrectly used that $a Z_{1} + b Z_{2} = (a + b)Z$. However, the statistical conclusion is still valid since $(a + b)Z$ ``dominates'' $\sqrt{a^{2} + b^{2}} \, Z$ in the sense that the former has variance $a^{2} + 2ab + b^{2}$, while the latter has variance only $a^{2} + b^{2}$. In other words, we were wrong but in the conservative direction. This was taken into account in \cite{maclean2015}.}
 \begin{align*}
  \sum_{k} (\nabla \phi_{ij} (x))_{k} \, x_{k} \, Z_{k} = \sum_{k} (\nabla \phi_{ij} (x) \circ x)_{k} \, Z = \| \nabla \phi_{ij} (x) \circ x \| Z.
 \end{align*}
 Therefore,
 \begin{align*}
  \tilde{A}_{ij} = A_{ij} + \epsilon \| \nabla \phi_{ij} (x) \circ x \| Z + O(\epsilon^{2}) = A_{ij} + \epsilon \| \nabla \phi_{ij} (\tilde{x}) \circ \tilde{x} \| Z + O(\epsilon^{2}),
 \end{align*}
 so, to first order in $\epsilon$,
 \begin{align*}
  (C_{\tilde{A}})_{ij} = \| \nabla \phi_{ij} (\tilde{x}) \circ \tilde{x} \|.
 \end{align*}
 An analogous derivation holds for $C_{\tilde{B}}$.

 Each of the bounds in the theorems above are linear in $\| \tilde{A} - A \|$ and $\| \tilde{B} - B \|$ (for Theorem \ref{thm:augmented-matrix}, the bound is simply the sum of these two) and so may be written as $\| C_{\tilde{A}} \circ Z \| + \| C_{\tilde{B}} \circ Z \|$ by absorbing constants.

 The basic strategy is now as follows. Let $\tau$ be a test statistic, i.e., $\sigma_{n + 1} ([\tilde{A}, \tilde{B}])$ in \S \ref{sec:augmented-matrix}. Then since
 \begin{align*}
  \tau_{\omega} \leq (\| C_{\tilde{A}} \circ Z \| + \| C_{\tilde{B}} \circ Z \|)_{\omega},
 \end{align*}
 where we have made explicit the dependence of both sides on the same underlying random mechanism $\omega$, the (cumulative) distribution function of $\tau$ must dominate that of $\| C_{\tilde{A}} \circ Z \| + \| C_{\tilde{B}} \circ Z \|$, i.e.,
 \begin{align*}
  \Pr (\tau \leq x) \geq \Pr (\| C_{\tilde{A}} \circ Z \| + \| C_{\tilde{B}} \circ Z \| \leq x).
 \end{align*}
 Thus,
 \begin{subequations}
  \begin{align}
   \Pr (\tau \geq x) &\leq \Pr (\| C_{\tilde{A}} \circ Z \| + \| C_{\tilde{B}} \circ Z \| \geq x)\\
   &= \int_{0}^{\infty} \Pr (\| C_{\tilde{A}} \circ Z \| = t) \Pr (\| C_{\tilde{B}} \circ Z \| \geq x - t) \, dt\\
   &= \int_{0}^{x} \Pr (\| C_{\tilde{A}} \circ Z \| = t) \Pr (\| C_{\tilde{B}} \circ Z \| \geq x - t) \, dt + \int_{x}^{\infty} \Pr (\| C_{\tilde{A}} \circ Z \| = t) \, dt\\
   &\leq \int_{0}^{x} \Pr (\| C_{\tilde{A}} \circ Z \| \geq t) \Pr (\| C_{\tilde{B}} \circ Z \| \geq x - t) \, dt + \Pr (\| C_{\tilde{A}} \circ Z \| \geq x).\label{eqn:prob-tau-rhs}
  \end{align}
  \label{eqn:prob-tau}
 \end{subequations}
 Note that if, e.g., $\tilde{B} = B$ (i.e., if $B$ were known exactly), then \eqref{eqn:prob-tau-rhs} simplifies to just $\Pr (\| C_{\tilde{A}} \circ Z \| \geq x)$.

 Using \eqref{eqn:prob-tau}, we can associate a $p$-value to any given realization of $\tau$ by referencing upper tail bounds for quantities of the form $\| C \circ Z \|$. Recall that $\tau = 0$ under the null hypothesis. In a classical statistical hypothesis testing framework, we may therefore reject the null hypothesis if \eqref{eqn:prob-tau-rhs} is at most $\alpha$, where $\alpha$ is the desired significance level (e.g., $\alpha = 0.05$).

 \subsection{Hadamard tail bounds}
 We now turn to bounding $\Pr (\| C \circ Z \| \geq x)$, where we will assume that $C, Z \in \mathbb{R}^{m \times n}$. This can be done in several ways.

 One easy way is to recognize that
 \begin{align}
  \| C \circ Z \| \leq \| C \circ Z \|_{F} \leq \| C \|_{F} \| Z \|_{F},
  \label{eqn:noise-frobenius}
 \end{align}
 where $\| \cdot \|_{F}$ is the Frobenius norm, so
 \begin{align*}
  \Pr (\| C \circ Z \| \geq x) \leq \Pr (\| C \circ Z \|_{F} \geq x) \leq \Pr \left( \| Z \|_{F} \geq \frac{x}{\| C \|_{F}} \right).
 \end{align*}
 But $\| Z \|_{F} \sim \chi_{mn}$ has a chi distribution\footnote{Note that this is {\em not} the chi-squared distribution (though $\| Z \|_{F}^{2} \sim \chi^{2}_{mn}$).} with $mn$ degrees of freedom. Therefore,
 \begin{align*}
  \Pr \left( \| Z \|_{F} \geq \frac{x}{\| C \|_{F}} \right) = \Pr \left( X \geq \frac{x}{\| C \|_{F}} \right), \quad X \sim \chi_{mn}.
 \end{align*}
 However, each inequality in \eqref{eqn:noise-frobenius} can be quite loose: The first is loose in the sense that
 \begin{align*}
  \| A \|^{2} = \sigma_{1}^{2} (A), \quad \| A \|_{F}^{2} = \sum_{k} \sigma_{k}^{2} (A);
 \end{align*}
 while the second in that
 \begin{align*}
  \left\|
  \begin{bmatrix}
   a_{1}\\
   a_{2}
  \end{bmatrix} \circ
  \begin{bmatrix}
   b_{1}\\
   b_{2}
  \end{bmatrix} \right\|_{F}^{2} = \left\|
  \begin{bmatrix}
   a_{1} b_{1}\\
   a_{2} b_{2}
  \end{bmatrix} \right\|_{F}^{2} = (a_{1} b_{1})^{2} + (a_{2} b_{2})^{2},
 \end{align*}
 but
 \begin{align*}
  \left\|
  \begin{bmatrix}
   a_{1}\\
   a_{2}
  \end{bmatrix} \right\|_{F}^{2} \left\|
  \begin{bmatrix}
   b_{1}\\
   b_{2}
  \end{bmatrix} \right\|_{F}^{2} = (a_{1}^{2} + a_{2}^{2})(b_{1}^{2} + b_{2}^{2}) = (a_{1} b_{1})^{2} + (a_{1} b_{2})^{2} + (a_{2} b_{1})^{2} + (a_{2} b_{2})^{2}.
 \end{align*}

 A slightly better approach is to use the inequality \cite{zhan:1997:siam-j-matrix-anal-appl}
 \begin{align*}
  \| C \circ Z \| \leq \| \min (\max_{i} \| C_{i,:} \|, \max_{j} \| C_{:,j} \|) \, \| Z \|,
 \end{align*}
 where $C_{i,:}$ and $C_{:,j}$ denote the $i$th row and $j$th column, respectively, of $C$. The $\| Z \|$ term can then be handled using a chi distribution via $\| Z \| \leq \| Z \|_{F}$ as above or directly using a concentration bound (see below). Variations on this undoubtedly exist.

 Here, we will appeal to a result by Tropp \cite{tropp:2012:found-comput-math}. The following is from \S 4.3 in \cite{tropp:2012:found-comput-math}.

 \begin{theorem}
  Let $C, Z \in \mathbb{R}^{m \times n}$, where each $Z_{ij} \sim \normdist (0, 1)$. Then for any $x \geq 0$,
  \begin{align*}
   \Pr (\| C \circ Z \| \geq x) \leq (m + n) \exp \left( -\frac{x^{2}}{2 \sigma^{2}} \right), \quad \sigma^{2} = \max (\max_{i} \| C_{i,:} \|^{2}, \max_{j} \| C_{:,j} \|^{2}).
  \end{align*}
  \label{thm:hadamard-gaussian}
 \end{theorem}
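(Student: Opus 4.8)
The plan is to recognize $C \circ Z$ as a matrix Gaussian series and then invoke the master tail bound of Tropp \cite{tropp:2012:found-comput-math} for such series, specializing its variance parameter to the Hadamard structure. The heavy machinery is entirely in the cited inequality; the task here reduces to one bookkeeping computation.

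First I would write the Hadamard product as a linear combination of the Gaussian entries. Letting $E_{ij} \in \mathbb{R}^{m \times n}$ denote the standard basis matrix with a single $1$ in position $(i,j)$, we have
\[
 C \circ Z = \sum_{i=1}^{m} \sum_{j=1}^{n} Z_{ij} \, (C_{ij} E_{ij}),
\]
which exhibits $C \circ Z$ as a Gaussian series $\sum_{i,j} Z_{ij} B_{ij}$ with fixed coefficient matrices $B_{ij} = C_{ij} E_{ij}$ and independent standard normals $Z_{ij}$. Tropp's bound for matrix Gaussian series states that for such a series in $\mathbb{R}^{d_1 \times d_2}$,
\[
 \Pr \left( \left\| \sum_{k} Z_k B_k \right\| \geq x \right) \leq (d_1 + d_2) \exp \left( -\frac{x^2}{2\nu} \right), \quad \nu = \max \left( \left\| \sum_k B_k B_k^{\trans} \right\|, \left\| \sum_k B_k^{\trans} B_k \right\| \right).
\]
Here $d_1 = m$ and $d_2 = n$, so the prefactor is already the claimed $m + n$; it remains only to show that the variance parameter $\nu$ equals $\sigma^2$.

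The one genuine computation is the evaluation of $\nu$. Using $E_{ij} E_{ij}^{\trans} = e_i e_i^{\trans} \in \mathbb{R}^{m \times m}$ and $E_{ij}^{\trans} E_{ij} = e_j e_j^{\trans} \in \mathbb{R}^{n \times n}$, where $e_i$ denotes a standard basis vector, I would compute
\[
 \sum_{i,j} B_{ij} B_{ij}^{\trans} = \sum_{i,j} C_{ij}^2 \, e_i e_i^{\trans} = \operatorname{diag} \left( \| C_{1,:} \|^2, \dots, \| C_{m,:} \|^2 \right),
\]
and analogously $\sum_{i,j} B_{ij}^{\trans} B_{ij} = \operatorname{diag}(\| C_{:,1} \|^2, \dots, \| C_{:,n} \|^2)$. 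Both matrices are diagonal and positive semidefinite, so their operator norms are the largest diagonal entries, namely $\max_i \| C_{i,:} \|^2$ and $\max_j \| C_{:,j} \|^2$. Taking the maximum gives $\nu = \max(\max_i \| C_{i,:} \|^2, \max_j \| C_{:,j} \|^2) = \sigma^2$, and substituting into the master bound yields the theorem.

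The real content of the argument is carried entirely by Tropp's inequality, whose proof rests on the matrix Laplace transform method and Lieb's concavity theorem; conditioned on citing that result, there is no serious obstacle. The only place to be careful is the variance bookkeeping: correctly matching the two one-sided sums $\sum_{i,j} B_{ij} B_{ij}^{\trans}$ and $\sum_{i,j} B_{ij}^{\trans} B_{ij}$ to the row-norm and column-norm maxima respectively, and confirming that the dimension prefactor $d_1 + d_2$ specializes to $m + n$ rather than to $mn$ (which is precisely the improvement this bound offers over the crude Frobenius estimate in \eqref{eqn:noise-frobenius}).
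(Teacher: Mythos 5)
Your proposal is correct and is essentially the same argument the paper relies on: the paper offers no proof of its own beyond citing \S 4.3 of Tropp \cite{tropp:2012:found-comput-math}, and that section consists precisely of the specialization you carry out — writing $C \circ Z$ as the Gaussian series $\sum_{i,j} Z_{ij} C_{ij} E_{ij}$ and reducing the variance parameter to the row/column-norm maxima. Your bookkeeping (the diagonal forms of $\sum B_{ij} B_{ij}^{\trans}$ and $\sum B_{ij}^{\trans} B_{ij}$, and the $m+n$ prefactor) is accurate, so the reconstruction stands.
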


 \subsection{Test statistic tail bounds}
 The bound \eqref{eqn:prob-tau-rhs} for $\Pr (\tau \geq x)$ can then be computed as follows. Let
 \begin{align*}
  P_{1} (x) = \int_{0}^{x} \Pr (\| C_{\tilde{A}} \circ Z \| \geq t) \Pr (\| C_{\tilde{B}} \circ Z \| \geq x - t) \, dt, \quad P_{2} (x) = \Pr (\| C_{\tilde{A}} \circ Z \| \geq x)
 \end{align*}
 so that $\Pr (\tau \geq x) \leq P_{1} (x) + P_{2} (x)$. Then by Theorem \ref{thm:hadamard-gaussian},
 \begin{align*}
  P_{1} (x) \leq (m + n)^{2} \int_{0}^{x} \exp \left[ -\frac{1}{2} \left( \frac{t^{2}}{\sigma_{A}^{2}} + \frac{(x - t)^{2}}{\sigma_{B}^{2}} \right) \right] \, dt,
 \end{align*}
 where $\sigma_{A}^{2}$ and $\sigma_{B}^{2}$ are the ``variance'' parameters in the theorem for $C_{\tilde{A}}$ and $C_{\tilde{B}}$, respectively. The term in parentheses simplifies to
 \begin{align*}
  \frac{t^{2}}{\sigma_{A}^{2}} + \frac{(x - t)^{2}}{\sigma_{B}^{2}} &= \frac{1}{\sigma_{A}^{2} \sigma_{B}^{2}} \left[ (\sigma_{A}^{2} + \sigma_{B}^{2}) t^{2} - 2 \sigma_{A}^{2} tx + \sigma_{A}^{2} x^{2} \right]\\
  &= \frac{1}{\sigma_{A}^{2} \sigma_{B}^{2}} \left[ (\sigma_{A}^{2} + \sigma_{B}^{2}) \left( t - \frac{\sigma_{A}^{2}}{\sigma_{A}^{2} + \sigma_{B}^{2}} x \right)^{2} + \sigma_{A}^{2} \left( 1 - \frac{\sigma_{A}^{2}}{\sigma_{A}^{2} + \sigma_{B}^{2}} \right) x^{2} \right]\\
  &= \frac{1}{\sigma_{A}^{2} \sigma_{B}^{2}} \left[ (\sigma_{A}^{2} + \sigma_{B}^{2}) \left( t - \frac{\sigma_{A}^{2}}{\sigma_{A}^{2} + \sigma_{B}^{2}} x \right)^{2} + \frac{\sigma_{A}^{2} \sigma_{B}^{2}}{\sigma_{A}^{2} + \sigma_{B}^{2}} x^{2} \right]\\
  &= \frac{\sigma_{A}^{2} + \sigma_{B}^{2}}{\sigma_{A}^{2} \sigma_{B}^{2}} \left( t - \frac{\sigma_{A}^{2}}{\sigma_{A}^{2} + \sigma_{B}^{2}} x \right)^{2} + \frac{x^{2}}{\sigma_{A}^{2} + \sigma_{B}^{2}}
 \end{align*}
 on completing the square. Therefore,
 \begin{align*}
  P_{1} (x) \leq (m + n)^{2} \exp \left[ -\frac{1}{2} \left( \frac{x^{2}}{\sigma_{A}^{2} + \sigma_{B}^{2}} \right) \right] \int_{0}^{x} \exp \left[ -\frac{1}{2} \left( \frac{\sigma_{A}^{2} + \sigma_{B}^{2}}{\sigma_{A}^{2} \sigma_{B}^{2}} \right) \left( t - \frac{\sigma_{A}^{2}}{\sigma_{A}^{2} + \sigma_{B}^{2}} x \right)^{2} \right] dt.
 \end{align*}
 Now set
 \begin{align*}
  \sigma^{2} = \frac{\sigma_{A}^{2} \sigma_{B}^{2}}{\sigma_{A}^{2} + \sigma_{B}^{2}}, \quad \alpha = \frac{\sigma_{A}^{2}}{\sigma_{A}^{2} + \sigma_{B}^{2}}
 \end{align*}
 so that the integral becomes
 \begin{align*}
  \int_{0}^{x} \exp \left[ -\frac{1}{2} \left( \frac{\sigma_{A}^{2} + \sigma_{B}^{2}}{\sigma_{A}^{2} \sigma_{B}^{2}} \right) \left( t - \frac{\sigma_{A}^{2}}{\sigma_{A}^{2} + \sigma_{B}^{2}} x \right)^{2} \right] dt = \int_{0}^{x} \exp \left[ -\frac{(t - \alpha x)^{2}}{2 \sigma^{2}} \right] dt.
 \end{align*}
 The variable substitution $u = (t - \alpha x)/\sigma$ then gives
 \begin{align*}
  \int_{0}^{x} \exp \left[ -\frac{(t - \alpha x)^{2}}{2 \sigma^{2}} \right] dt = \sigma \int_{-\alpha x / \sigma}^{(1 - \alpha) x / \sigma} e^{-u^{2}/2} \, du = \sqrt{2 \pi} \sigma \left[ \Phi \left( \frac{(1 - \alpha) x}{\sigma} \right) - \Phi \left( -\frac{\alpha x}{\sigma} \right) \right],
 \end{align*}
 where
 \begin{align*}
  \Phi (x) = \frac{1}{\sqrt{2 \pi}} \int_{-\infty}^{x} e^{-t^{2}/2} \, dt
 \end{align*}
 is the standard normal distribution function. Thus,
 \begin{align}
  P_{1} (x) \leq \sqrt{2 \pi} \sigma (m + n)^{2} \left[ \Phi \left( \frac{(1 - \alpha) x}{\sigma} \right) - \Phi \left( -\frac{\alpha x}{\sigma} \right) \right] \exp \left[ -\frac{1}{2} \left( \frac{x^{2}}{\sigma_{A}^{2} + \sigma_{B}^{2}} \right) \right].
  \label{eqn:p1}
 \end{align}
 A similar (but much simpler) analysis yields
 \begin{align}
  P_{2} (x) \leq (m + n) \exp \left( -\frac{x^{2}}{2 \sigma_{A}^{2}} \right).
  \label{eqn:p2}
 \end{align}

\section{Gaussian Processes to estimate derivatives}

We next present a method for estimating higher order derivatives and the estimation error using Gaussian Process regression and then apply the differential invariant method to both linear and nonlinear models in the subsequent sections.

A Gaussian process (GP) is a stochastic process $X(t) \sim \normdist (\mu(t), \Sigma(t,t'))$, where $\mu(t)$ is a mean function and $\Sigma(t,t')$ a covariance function. GPs are often used for regression/prediction as follows.

 Suppose that there is an underlying deterministic function $x(t)$ that we can only observe with some measurement noise as $\hat{x} (t) = x(t) + \epsilon(t)$, where $\epsilon(t) \sim \normdist(0, \sigma^{2} (t) \delta (t, t'))$ for
 \begin{align*}
  \delta(t, t') =
  \begin{cases}
   1 & \text{if $t = t'$}\\
   0 & \text{if $t \neq t'$}
  \end{cases}
 \end{align*}
 the Dirac delta. We consider the problem of finding $x(t)$ in a Bayesian setting by assuming it to be a GP with prior mean and covariance functions $\mu_{\prior}$ and $\Sigma_{\prior}$, respectively. Then the joint distribution of $\hat{x} (\boldsymbol{t}) = [\hat{x} (t_{1}), \dots, \hat{x} (t_{p})]^{\trans}$ at the observation points $\boldsymbol{t} = [t_{1}, \dots, t_{p}]^{\trans}$ and $x(\boldsymbol{s}) = [x(s_{1}), \dots, x(s_{q})]^{\trans}$ at the prediction points $\boldsymbol{s} = [s_{1}, \dots, s_{q}]^{\trans}$ is
 \begin{align}
  \begin{bmatrix}
   \hat{x} (\boldsymbol{t})\\
   x(\boldsymbol{s})
  \end{bmatrix} \sim \normdist \left(
  \begin{bmatrix}
   \mu_{\prior} (\boldsymbol{t})\\
   \mu_{\prior} (\boldsymbol{s})
  \end{bmatrix},
  \begin{bmatrix}
   \Sigma_{\prior} (\boldsymbol{t}, \boldsymbol{t}) + \sigma^{2} (\boldsymbol{t}) I & \Sigma_{prior}^{\trans} (\boldsymbol{s}, \boldsymbol{t})\\
   \Sigma_{\prior} (\boldsymbol{s}, \boldsymbol{t}) & \Sigma_{\prior} (\boldsymbol{s}, \boldsymbol{s})
  \end{bmatrix}
  \right).
  \label{eqn:joint-dist}
 \end{align}
 The conditional distribution of $x(\boldsymbol{s})$ given $x(\boldsymbol{t}) = \hat{x} (\boldsymbol{t})$ is also Gaussian:
 \begin{align}
  x(\boldsymbol{s}) \: | \: (x(\boldsymbol{t}) = \hat{x} (\boldsymbol{t})) \sim \normdist (\mu_{\post}, \Sigma_{\post}),
  \label{eqn:gpr}
 \end{align}
 where
 \begin{align*}
  \mu_{\post} &= \mu_{\prior} (\boldsymbol{s}) + \Sigma_{\prior} (\boldsymbol{s}, \boldsymbol{t}) (\Sigma_{\prior} (\boldsymbol{t}, \boldsymbol{t}) + \sigma^{2} (\boldsymbol{t}) I)^{-1} (\hat{x} (\boldsymbol{t}) - \mu_{\prior} (\boldsymbol{t}))\\
  \Sigma_{\post} &= \Sigma_{\prior} (\boldsymbol{s}, \boldsymbol{s}) - \Sigma_{\prior} (\boldsymbol{s}, \boldsymbol{t}) (\Sigma_{\prior} (\boldsymbol{t}, \boldsymbol{t}) + \sigma^{2} (\boldsymbol{t}) I)^{-1}) \Sigma_{\prior}^{\trans} (\boldsymbol{s}, \boldsymbol{t})
 \end{align*}
 are the posterior mean and covariance, respectively. This allows us to infer $x(\boldsymbol{s})$ on the basis of observing $\hat{x} (\boldsymbol{t})$. The diagonal entries of $\Sigma_{\post}$ are the posterior variances and quantify the uncertainty associated with this inference procedure.

 \subsection{Estimating derivatives}
 Equation \eqref{eqn:gpr} provides an estimate for the function values $x(\boldsymbol{s})$. What if we want to estimate its derivatives? Let $\cov (x(t), x(t')) = k(t, t')$ for some covariance function $k$. Then $\cov (x^{(m)} (t), x^{(n)} (t')) = \partial_{t}^{m} \partial_{t'}^{n} k(t, t')$ by linearity of differentiation. Thus,
 \begin{align*}
  \begin{pmat}[{.}]
   \hat{x} (\boldsymbol{t}) \cr\-
   x(\boldsymbol{s}) \cr
   x'(\boldsymbol{s}) \cr
   \vdots \cr
   x^{(n)} (\boldsymbol{s}) \cr
  \end{pmat} \sim \normdist \left(
  \begin{pmat}[{.}]
   \mu_{\prior} (\boldsymbol{t}) \cr\-
   \mu_{\prior} (\boldsymbol{s}) \cr
   \mu_{\prior}^{(1)} (\boldsymbol{s}) \cr
   \vdots \cr
   \mu_{\prior}^{(n)} (\boldsymbol{s}) \cr
  \end{pmat},
  \begin{pmat}[{|...}]
   \Sigma_{\prior} (\boldsymbol{t}, \boldsymbol{t}) + \sigma^{2} (\boldsymbol{t}) I & \Sigma_{\prior}^{\trans} (\boldsymbol{s}, \boldsymbol{t}) & \Sigma_{\prior}^{(1,0),\trans} (\boldsymbol{s}, \boldsymbol{t}) & \cdots & \Sigma_{\prior}^{(n,0), \trans} (\boldsymbol{s}, \boldsymbol{t}) \cr\-
   \Sigma_{\prior} (\boldsymbol{s}, \boldsymbol{t}) & \Sigma_{\prior} (\boldsymbol{s}, \boldsymbol{s}) & \Sigma_{\prior}^{(1,0), \trans} (\boldsymbol{s}, \boldsymbol{s}) & \cdots & \Sigma_{\prior}^{(n,0), \trans} (\boldsymbol{s}, \boldsymbol{s}) \cr
   \Sigma_{\prior}^{(1,0)} (\boldsymbol{s}, \boldsymbol{t}) & \Sigma_{\prior}^{(1,0)} (\boldsymbol{s}, \boldsymbol{s}) & \Sigma_{\prior}^{(1,1)} (\boldsymbol{s}, \boldsymbol{s}) & \cdots & \Sigma_{\prior}^{(n,1), \trans} (\boldsymbol{s}, \boldsymbol{s}) \cr
   \vdots & \vdots & \vdots & \ddots & \vdots \cr
   \Sigma_{\prior}^{(n,0)} (\boldsymbol{s}, \boldsymbol{t}) & \Sigma_{\prior}^{(n,0)} (\boldsymbol{s}, \boldsymbol{s}) & \Sigma_{\prior}^{(n,1)} (\boldsymbol{s}, \boldsymbol{s}) & \cdots & \Sigma_{(n,n)} (\boldsymbol{s}, \boldsymbol{s}) \cr
  \end{pmat} \right),
 \end{align*}
 where $\mu_{\prior}^{(i)} (t)$ is the prior mean for $x^{(i)} (t)$ and $\Sigma_{\prior}^{(i,j)} (t, t') = \partial_{t}^{i} \partial_{t'}^{j} \Sigma_{\prior} (t, t')$. This joint distribution is exactly of the form \eqref{eqn:joint-dist}. An analogous application of \eqref{eqn:gpr} then yields the posterior estimate of $x^{(i)} (\boldsymbol{s}) \: | \: (x(\boldsymbol{t}) = \hat{x} (\boldsymbol{t}))$ for all $i = 0, 1, \dots, n$.

 Alternatively, if we are interested only in the posterior variances of each $x^{(i)} (\boldsymbol{s})$, then it suffices to consider each $2 \times 2$ block independently:
 \begin{align*}
  \begin{bmatrix}
   \hat{x} (\boldsymbol{t})\\
   x^{(i)} (\boldsymbol{s})
  \end{bmatrix} \sim \normdist \left(
  \begin{bmatrix}
   \mu_{\prior} (\boldsymbol{t})\\
   \mu_{\prior}^{(i)} (\boldsymbol{s})
  \end{bmatrix},
  \begin{bmatrix}
   \Sigma_{\prior} (\boldsymbol{t}, \boldsymbol{t}) + \sigma^{2} (\boldsymbol{t}) I & \Sigma_{\prior}^{(i,0), \trans} (\boldsymbol{s}, \boldsymbol{t})\\
   \Sigma_{\prior}^{(i,0)} (\boldsymbol{s}, \boldsymbol{t}) & \Sigma_{\prior}^{(i,i)} (\boldsymbol{s}, \boldsymbol{s})
  \end{bmatrix} \right).
 \end{align*}
 The cost of computing $(\Sigma_{\prior} (\boldsymbol{t}, \boldsymbol{t}) + \sigma^{2} (\boldsymbol{t}) I)^{-1}$ can clearly be amortized over all $i$.

 \subsection{Formulae for squared exponential covariance functions}
 We now consider the specific case of the squared exponential (SE) covariance function
 \begin{align*}
  k(t, t') = \theta^{2} \exp \left[ -\frac{(t - t')^{2}}{2 \ell^{2}} \right],
 \end{align*}
 where $\theta^{2}$ is the signal variance and $\ell$ is a length scale. The SE function is one of the most widely used covariance functions in practice. Its derivatives can be expressed in terms of the (probabilists') Hermite polynomials
 \begin{align*}
  H_{n} (x) = (-1)^{n} e^{x^{2} / 2} \frac{d^{n}}{d x^{n}} e^{-x^{2} / 2}
 \end{align*}
 (these are also sometimes denoted $He_{n} (x)$). The first few Hermite polynomials are $H_{0} (x) = 1$, $H_{1} (x) = x$, and $H_{2} (x) = x^{2} - 1$.

 We need to compute the derivatives $\partial_{t}^{m} \partial_{t'}^{n} k(t, t')$. Let $v = (t - t') / \ell$ so that $k(t, t') = k(v) = \theta^{2} e^{-v^{2} / 2}$. Then $\partial_{t}^{m} f(v) = (1 / \ell)^{m} f^{(m)} (v)$ and $\partial_{t'}^{n} f(v) = (-1 / \ell)^{n} f^{(n)} (v)$. Therefore,
 \begin{align*}
  \frac{\partial^{m}}{\partial t^{m}} \frac{\partial^{n}}{\partial {t'}^{n}} k(t, t') = \frac{(-1)^{n}}{\ell^{m + n}} k^{(m + n)} (v) = \frac{(-1)^{m}}{\ell^{m + n}} H_{m + n} (v) k(v) = \frac{(-1)^{m}}{\ell^{m + n}} H_{m + n} \left( \frac{t - t'}{\ell} \right) k(t, t').
 \end{align*}


The GP regression requires us to have the values of the hyperparameters $\sigma^2$, $\theta^2$, and $\ell$. In practice, however, these are hardly ever known. In the examples below, we deal with this by estimating the hyperparameters from the data by maximizing the likelihood. We do this by using a nonlinear conjugate gradient algorithm, which can be quite sensitive to the initial starting point, so we initialize multiple runs over a small grid in hyperparameter space and return the best estimate found. This increases the quality of the estimated hyperparameters but can still sometimes fail.

\section{Results}
 
We showcase our method on competing models: linear compartment models (2 and 3 species), Lotka-Volterra models (2 and 3 species) and Lorenz.  As the linear compartment differential invariants were presented in an earlier section, we compute the differential invariants of the Lotka-Volterra and Lorenz using {\tt RosenfeldGroebner}. We simulate each of these models to generate time course data, add varying levels of noise, and estimate the necessary higher order derivatives using GP regression. As described in the earlier section, we require the estimation of the higher order derivatives to satisfy a negative log likelihood value, otherwise the GP fit is not `good'. In some cases, this can be remedied by increase the number of data points. Using the estimated GP regression data, we test each of the models using the differential invariant method on other models.

\begin{ex} \label{ex:LV2} 
The two species Lotka-Volterra model is:
\begin{align*}
\dot{x}_1 &= p_1 x_1 - p_2 x_1x_2,\\
\dot{x}_2 &= -p_3 x_2+ p_4 x_1x_2,
\end{align*}
where $x_1$ and $x_2$ are variables, and $p_1,p_2,p_3,p_4$ are parameters. We assume only $x_1$ is  observable and perform differential elimination and obtain our differential invariant in terms of only $y=x_1(t)$:
$$p_4\dot{y}y^2 - p_3\dot{y}y - p_1 p_4 y^3 + p_1 p_3 y^2=\ddot{y}y - \dot{y}^2.$$

\end{ex}

\begin{ex} \label{ex:LV3} 
By including an additional variable $z$, the three species Lotka-Volterra model is:
\begin{align*}
\dot{x}_1 &= p_1 x_1 - p_2 x_1x_2,\\
\dot{x}_2 &= -p_3 x_2+ p_4 x_1x_2- p_5 x_2x_3,\\
\dot{x}_3 &= - p_6 x_3+ p_7 x_2x_3,
\end{align*}

Assuming only $y=x_1$ is observable. After differential elimination, the differential invariant is:


\begin{align*}
& \left(p_1^2p_4 p_6  -\frac{p_1^3p_4 p_7  }{p_2  }\right)y^5 +\left(\frac{p_1^3p_3 p_7  }{p_2  }-p_1^2p_3 p_6  \right)y^4+\left(\frac{3p_1^2p_4 p_7  }{p_2  }+p_1^2p_4 +2p_1p_4 p_6  \right) \dot{y}y^4 + \left(2p_1p_3 p_6  -\frac{3p_1^2p_3 p_7  }{p_2  }\right)\dot{y}y^3\\
 &+\left(p_4 p_6  - 2p_1p_4 -\frac{3p_1p_4 p_7  }{p_2  }\right)\dot{y}^2y^3 +\left(\frac{p_1^2p_7  +3p_1p_3 p_7  }{p_2  }-p_3 p_6  -p_1p_6  \right)\dot{y}^2y^2 
 +\left(\frac{p_4 p_7  }{p_2  }+p_4 \right)\dot{y}^3y^2\\
 &+\left(2p_1+p_6  -\frac{2p_1p_7  +p_3 p_7  }{p_2  }\right)\dot{y}^3y
+\frac{p_7  }{p_2  }\dot{y}^4+\left(p_1p_6  -\frac{p_1^2p_7  }{p_2  }\right)\ddot{y}y^3
 +\left(\frac{2p_1p_7  }{p_2  }-3p_1-p_6  \right)\ddot{y}\dot{y}y^2-\frac{p_7  }{p_2  }\ddot{y}\dot{y}^2y +p_1\dddot{y}y^3\\
&=  -\ddot{y}^2y^2    + \dddot{y}\dot{y}y^2  -\ddot{y}\dot{y}^2y + \dot{y}^4.
\end{align*}
\end{ex}

\begin{ex} \label{ex:Lor} 
Another three species model, the Lorenz model, is described by the system of equations:
\begin{align*}
\dot{x}_1 &= p_1(x_2-x_1),\\
\dot{x}_2 &= x_1(p_2-x_3)-x_2,\\
\dot{x}_3 &= x_1x_2 - p_3 x_3,
\end{align*}
We assume only $y=x_1$ is observable, perform differential elimination, and obtain the following invariant:
\begin{align*}
- (p_1+p_3) \ddot{y}y + p_1\dot{y}^2 - (p_1 p_3+p_3)\dot{y}y - p_1y^4 + (p_1 p_2 p_3-p_1 p_3)y^2 
= \dddot{y}y - \ddot{y}\dot{y} + \ddot{y}y - \dot{y}^2 + \dot{y}y^3.
\end{align*}

\end{ex}

\begin{ex} \label{ex:LC2} 
A linear 2-compartment model without input can be written as:
\begin{align*}
\dot{x}_1 &= p_{11}x_1 + p_{12}x_2,\\
\dot{x}_2 &= p_{21}x_1 + p_{22}x_2,
\end{align*}
where $x_1$ and $x_2$ are variables, and $p_{11},p_{12},p_{21},p_{22}$ are parameters. We assume only $x_1$ is  observable and perform differential elimination and obtain our differential invariant in terms of only $y=x_1(t)$:
$$\ddot{y} - (p_{11}+p_{22})\dot{y} + (p_{11}p_{22}-p_{12}p_{21})y = 0$$

\end{ex}

\begin{ex} \label{ex:LC3} 
The Linear 3-Compartment model without input is:
\begin{align*}
\dot{x}_1 &= p_{11}x_1 + p_{12}x_2 + p_{13}x_3,\\
\dot{x}_2 &= p_{21}x_1 + p_{22}x_2 + p_{23}x_3,\\
\dot{x}_3 &= p_{31}x_1 + p_{32}x_2 + p_{33}x_3,
\end{align*}
where $x_1, x_2, x_3$ are variables, and $p_{11}. p_{12}, p_{13}, p_{21}, p_{22}, p_{23}, p_{31}, p_{32}, p_{33}$ are parameters. We assume only $x_1$ is  observable and perform differential elimination and obtain our differential invariant in terms of only $y=x_1(t)$:
\begin{align*}
& \dddot{y} - (p_{11} + p_{22} + p_{33})\ddot{y} + (p_{12}p_{21} - p_{11}p_{22} + p_{13}p_{31} + p_{23}p_{32} - p_{11}p_{33} - p_{22}p_{33})\ddot{y} \\
& - (-p_{13}p_{22}p_{31} + p_{12}p_{23}p_{31} + p_{13}p_{21}p_{32} - p_{11}p_{23}p_{32} - p_{12}p_{21}p_{33} +
p_{11}p_{22}p_{33})y = 0
\end{align*}

\end{ex}

 \par
By assuming $y=x_1$ in Examples 6.1--6.5 represents the same observable variable, we apply our method to data simulated from each model and perform model comparison. The models are simulated and 100 time points are obtained variable $x$ in each model. We add different levels of Gaussian noise to the simulated data, and then estimate the higher order derivatives from the data. For example, during our study we found that for some parameters of the Lotka-Volterra three species model, e.g. $ p_1,p_2,p_3,p_4,p_5,p_6,p_7=[1.24; 1.68;3.26;0.38;1.50;0.15;1.14]$, we obtained a positive log-likelihood, which meant that we could not estimate the higher order derivatives of the data. Once the data is obtained and derivative data are estimated through the GP regression, each model data set is tested against the other differential invariants. Results are shown in Figure~\ref{fig-four}, where a value of 0, means model rejected, and 1 means model is compatible. We find that we can reject the three species Lotka-Volterra model and Lorenz model for data simulated from the Lotka-Volterra two species; however both linear compartment models are compatible. For data from the three species Lotka-Volterra model, the linear compartment models and two-species Lotka-Volterra can be rejected until the noise increases and then the method can no longer reject any models. Finally data generated from the Lorenz model can only reject the two species linear compartment and two species Lotka-Volterra model.

\begin{figure}[h!]
\centering
\includegraphics[width=\textwidth]{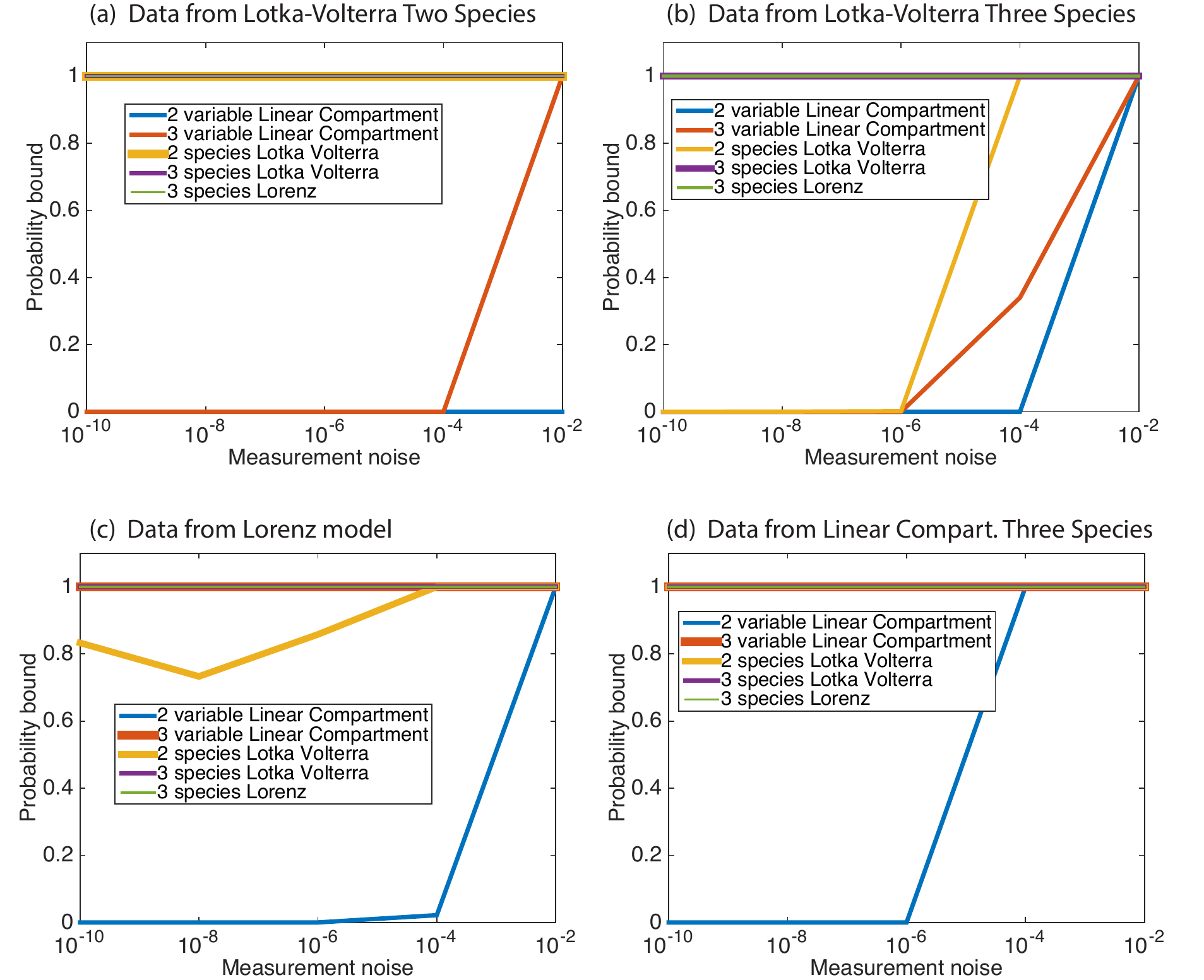}
\caption{Data simulated from model specified and differential algebraic statistics model rejection applied to five model invariants. Gaussian noise is added to data in increments of 0.1 at levels shown in figure.
(a) Data simulated from two species Lotka-Volterra model with parameter values$p_1,p_2,p_3,p_4 =[1.24; 1.68;3.26;0.38]$ and initial condition $z0 = [10,1]$. (b) Data simulated from three species Lotka-Volterra model with parameter values $p_1,p_2,p_3,p_4,p_5,p_6,p_7 =[0.178; 0.12; 0.99;0.17;0.03;0.56;0.88]$ and initial condition $z0 = [2,1,1]$. (c) Data simulated from the Lorenz model with parameter values $p_1,p_2,p_3 =[3.5,.3,2.8]$ and initial condition $z0 = [2,1,1]$.  (d) Data simulated from the Linear Compartment three species model with parameter values $p_{11},p_{12},p_{13},p_{21},p_{22},p_{23},p_{31},p_{32},p_{33} = [-2,1,0,1,-3,1,0,1,-2]$ and initial condition $z0 = [3,1,5]$.  }
\label{fig-four}
\end{figure}

%


\section{Other Considerations: known parameter values, algebraic dependencies, and identifiability}

We have demonstrated our model discrimination algorithm on various models.  In this section, we consider some other theoretical points regarding differential invariants.  

Note that we have assumed that the parameters are all unknown and we have not taken any possible algebraic dependencies among the coefficients into account.  This latter point is another reason our algorithm only concerns model rejection and not model selection.  Thus, each unknown coefficient is essential treated as an independent unknown variable in our linear system of equations.   However, there may be instances where we'd like to consider incorporating this additional information.  We first consider the effect of incorporating known parameter values.

In \cite{Meshkat2015}, an explicit formula for the input-output equations for linear models was derived.  In particular, it was shown that all linear $n-$compartment models corresponding to strongly connected graphs with at least one leak and having the same input and output compartments will have the same differential polynomial form of the input-output equations.  For example, a linear 2-compartment model with a single input and output in the same compartment and corresponding to a strongly connected graph with at least one leak has the form:
$$ \ddot{y}+c_{1}\dot{y}+c_{2}y=\dot{u}+c_{3}u$$

Thus, our model discrimination method would not work for two distinct linear 2-compartment models with the above-mentioned form. In order to discriminate between two such models, we need to take other information into account, e.g. known parameter values. 

\begin{ex} Consider the following two linear 2-compartment models:

$$
\begin{pmatrix} 
\dot{x}_1 \\
\dot{x}_2 \end{pmatrix} = {\begin{pmatrix} 
-p_{01}-p_{21} & p_{12} \\
p_{21} & -p_{12}
\end{pmatrix}} {\begin{pmatrix}
x_1 \\
x_2 \end{pmatrix} } + {\begin{pmatrix}
u \\
0 \end{pmatrix}}, \quad \quad y=x_1$$

$$
\begin{pmatrix} 
\dot{x}_1 \\
\dot{x}_2 \end{pmatrix} = {\begin{pmatrix} 
- p_{21} & p_{12} \\
p_{21} & -p_{02}-p_{12}
\end{pmatrix}} {\begin{pmatrix}
x_1 \\
x_2 \end{pmatrix} } + {\begin{pmatrix}
u \\
0 \end{pmatrix}}, \quad \quad y=x_1$$

whose corresponding input-output equations are of the form:
\begin{align*}
 \ddot{y}+(p_{01}+p_{21}+p_{12})\dot{y}+p_{01}p_{12}y&=\dot{u}+p_{12}u\\
 \ddot{y}+(p_{21}+p_{12}+p_{02})\dot{y}+p_{02}p_{21}y&=\dot{u}+(p_{02}+p_{12})u
 \end{align*}

Notice that both of these equations are of the above-mentioned form, i.e. both 2-compartment models have a single input and output in the same compartment and correspond to strongly connected graphs with at least one leak. In the first model, there is a leak from the first compartment and an exchange between compartments $1$ and $2$.  In the second model, there is a leak from the second compartment and an exchange between compartments $1$ and $2$.  Assume that the parameter $p_{12}$ is known.  In the first model, this changes our invariant to:
$$(p_{01}+p_{21})\dot{y}+p_{01}(p_{12}y)=\dot{u}+p_{12}u-\ddot{y}-p_{12}\dot{y}, \quad\text{or,} \quad
 c_{1}\dot{y}+c_{2}(p_{12}y)=\dot{u}+p_{12}u-\dot{y}-p_{12}\dot{y} $$

In the second model, our invariant is:
$$ (p_{21}+p_{02})\dot{y}+p_{02}p_{21}y-p_{02}u=\dot{u}+p_{12}u-\ddot{y}-p_{12}\dot{y} \quad \text{or,} \quad
 c_{1}\dot{y}+c_{2}y+c_{3}u=\dot{u}+p_{12}u-\ddot{y}-p_{12}\dot{y}$$

In this case, the right-hand sides of the two equations are the same, but the first equation has two variables (coefficients) while the second equation has three variables (coefficients).  Thus, if we had data from the second model, we could try to reject the first model (much like the 3-compartment versus 2-compartment model discrimination in the examples below).  In other words, a vector in the span of $\dot{y},y,$ and $u$ for $t_1,t_2,t_3$ may not be in the span of $\dot{y}$ and $y$ only.

\end{ex}

We next consider the effect of incorporating coefficient dependency relationships.  While we cannot incorporate the polynomial algebraic dependency relationships among the coefficients in our linear algebraic approach to model rejection, we can include certain dependency conditions, such as certain coefficients becoming known constants.  We have already seen one way in which this can happen in the previous example (from known nonzero parameter values).  We now explore the case where certain coefficients go to zero.  From the explicit formula for input-output equations from \cite{Meshkat2015}, we get that a linear model without any leaks has a zero term for the coefficient of $y$.  Thus a linear 2-compartment model with a single input and output in the same compartment and corresponding to a strongly connected graph without any leaks has the form:

$$ \ddot{y}+c_{1}\dot{y}=\dot{u}+c_{2}u$$

Thus to discriminate between two distinct linear 2-compartment models, one with leaks and one without any leaks, we should incorporate this zero coefficient into our invariant.

\begin{ex} Consider the following two linear 2-compartment models:

$$
\begin{pmatrix} 
\dot{x}_1 \\
\dot{x}_2 \end{pmatrix} = {\begin{pmatrix} 
-p_{01}-p_{21} & p_{12} \\
p_{21} & -p_{12}
\end{pmatrix}} {\begin{pmatrix}
x_1 \\
x_2 \end{pmatrix} } + {\begin{pmatrix}
u \\
0 \end{pmatrix}},\quad \quad y=x_1$$

$$
\begin{pmatrix} 
\dot{x}_1 \\
\dot{x}_2 \end{pmatrix} = {\begin{pmatrix} 
- p_{21} & p_{12} \\
p_{21} & -p_{12}
\end{pmatrix}} {\begin{pmatrix}
x_1 \\
x_2 \end{pmatrix} } + {\begin{pmatrix}
u \\
0 \end{pmatrix}}, \quad \quad y=x_1$$

whose corresponding input-output equations are of the form:
\begin{align*}
\ddot{y}+(p_{01}+p_{21}+p_{12})\dot{y}+p_{01}p_{12}y=&\dot{u}+p_{12}u\\
\ddot{y}+(p_{21}+p_{12})\dot{y}=&\dot{u}+p_{12}u
\end{align*}
In the first model, there is a leak from the first compartment and an exchange between compartments $1$ and $2$.  In the second model, there is an exchange between compartments $1$ and $2$ and no leaks.  Thus, our invariants can be written as:
\begin{align*}
 c_{1}\dot{y}+c_{2}y+c_{3}u=\dot{u}-\ddot{y}\\
 c_{1}\dot{y}+c_{2}u=\dot{u}-\ddot{y}
\end{align*}

Again, the right-hand sides of the two equations are the same, but the first equation has three variables (coefficients) while the second equation has two variables (coefficients).  Thus, if we had data from the first model, we could try to reject the second model.  In other words, a vector in the span of $\dot{y},y,$ and $u$ for $t_1,t_2,t_3$ may not be in the span of $\dot{y}$ and $u$ only.

\end{ex}

Finally, we consider the identifiability properties of our models.  If the number of parameters is greater than the number of coefficients, then the model is unidentifiable.  On the other hand, if the number of parameters is less than or equal to the number of coefficients, then the model could possibly be identifiable.  Clearly, an identifiable model is preferred over an unidentifiable model.  We note that, in our approach of forming the linear system $A \kappa = b$ from the input-output equations, we could in theory solve for the coefficients $\kappa$ and then solve for the parameters from these known coefficient values if the model is identifiable \cite{Boulier}.  However, this is not a commonly used method to estimate parameter values in practice.

As noted above, the possible algebraic dependency relationships among the coefficients are not taken into account in our linear algebra approach.  This means that there could be many different models with the same differential polynomial form of the input-output equations.  If such a model cannot be rejected, we note that an identifiable model satisfying a particular input-output relationship is preferred over an unidentifiable one satisying the same form of the input-output relations, as we see in the following example.       

\begin{ex} Consider the following two linear 2-compartment models:

$$
\begin{pmatrix} 
\dot{x}_1 \\
\dot{x}_2 \end{pmatrix} = {\begin{pmatrix} 
-p_{01}-p_{21} & p_{12} \\
p_{21} & -p_{12}
\end{pmatrix}} {\begin{pmatrix}
x_1 \\
x_2 \end{pmatrix} } + {\begin{pmatrix}
u \\
0 \end{pmatrix}}, \quad \quad y=x_1$$

$$
\begin{pmatrix} 
\dot{x}_1 \\
\dot{x}_2 \end{pmatrix} = {\begin{pmatrix} 
-p_{01}-p_{21} & p_{12} \\
p_{21} & -p_{02}-p_{12}
\end{pmatrix}} {\begin{pmatrix}
x_1 \\
x_2 \end{pmatrix} } + {\begin{pmatrix}
u \\
0 \end{pmatrix}}, \quad \quad y=x_1$$

whose corresponding input-output equations are of the form:
\begin{align*} 
\ddot{y}+(p_{01}+p_{21}+p_{12})\dot{y}+p_{01}p_{12}y=&\dot{u}+p_{12}u\\
 \ddot{y}+(p_{01}+p_{21}+p_{12}+p_{02})\dot{y}+(p_{01}p_{02}+p_{01}p_{12}+p_{02}p_{21})y=&\dot{u}+(p_{02}+p_{12})u
 \end{align*}

In the first model, there is a leak from the first compartment and an exchange between compartments $1$ and $2$.  In the second model, there are leaks from both compartments and an exchange between compartments $1$ and $2$.  Thus, both models have invariants of the form:
$$ \ddot{y}+c_{1}\dot{y}+c_{2}y=\dot{u}+c_{3}u$$

Since the first model is identifiable and the second model is unidentifiable, we prefer to use the form of the first model if the model's invariant cannot be rejected.
\end{ex}

\section{Discussion/Conclusion}
After performing this differential algebraic statistics model rejection, one has already obtained the input-output equations to test structural identifiability \cite{Ljung, Ollivier, Saccomani}. In a sense, our method extends the current spectrum of potential approaches for comparing models with time course data, in that one first can reject incompatible models, then test structural identifiability of compatible models using input-output equations obtained from the differential elimination, infer parameter values of the admissible models, and apply an information criterion model selection method to assert the best model. 

Notably the presented differential algebraic statistics method does not penalize for model complexity, unlike traditional model selection techniques. Rather, we reject when a model cannot, for any parameter values, be compatible with the given data. We found that simpler models, such as the linear 2 compartment model could be rejected when data was generated from a more complex model, such as the three species Lotka-Volterra model, which elicits a wider range of behavior. On the other hand, more complex models, such as the Lorenz model, were often not rejected, from data simulated from less complex models. In future it would be helpful to better understand the relationship between differential invariants and dynamics. We also think it would be beneficial to investigate algebraic properties of sloppiness \cite{gutenkunst:2007:plos-comput-biol}.

We believe there is large scope for additional parameter-free coplanarity model comparison methods. It would be beneficial to explore which algorithms for differential elimination can handle larger systems, and whether this area could be extended.

\section*{Acknowledgments}
The authors acknowledge funding from the American Institute of Mathematics (AIM) where this research commenced. The authors thank Mauricio Barahona, Mike Osborne, and Seth Sullivant for helpful discussions. We are especially grateful to Paul Kirk for discussions on GPs and providing his GP code, which served as an initial template to get started. NM was partially supported by the David and Lucille Packard Foundation. HAH acknowledges funding from AMS Simons Travel Grant, EPSRC Fellowship EP/K041096/1 and MPH Stumpf Leverhulme Trust Grant.

\end{document}